\begin{document}
\def\rn{{\mathbb R^n}}  \def\sn{{\mathbb S^{n-1}}}
\def\co{{\mathcal C_\Omega}}
\def\z{{\mathbb Z}}
\def\nm{{\mathbb (\rn)^m}}
\def\mm{{\mathbb (\rn)^{m+1}}}
\def\n{{\mathbb N}}
\def\cc{{\mathbb C}}

\newtheorem{defn}{Definition}
\newtheorem{thm}{Theorem}
\newtheorem{lem}{Lemma}
\newtheorem{cor}{Corollary}
\newtheorem{rem}{Remark}

\title{\bf\Large Sharp bounds for a class of integral operators in weighted-type spaces on Heisenberg group
\footnotetext{{\it Key words and phrases}: a class of integral operators, Hardy operator ,Hardy Littlewood-P\'{o}lya operator, Hilbert operator, weighted-type spaces, Heisenberg group.
\newline\indent\hspace{1mm} {\it 2020 Mathematics Subject Classification}:  Primary 42B35; Secondary 26D15.}}

\date{}
\author{Xiang Li, Zhanpeng Gu,Dunyan Yan and Zhongci Hang\footnote{Corresponding author}}
\maketitle

\begin{center}
\begin{minipage}{13cm}
{\small {\bf Abstract:}\quad
In this paper, we will use the conclusions and methods in \cite{1} to obtain the sharp bounds for a class of integral operators with the nonnegative kernels in weighted-type spaces on Heisenberg group. As promotions, the sharp bounds of Hardy operator , Hardy Littlewood-P\'{o}lya operator and Hilbert operator are also obtained.}
\end{minipage}
\end{center}

%------------------------section 1------------------------
\begin{center}
\section{Introduction}\label{sec1}
\end{center}

 In \cite{2}, Stein and Weiss gave a class of integral operators with the nonnegative kernels which satisfy some homogeneity and the rotational invariance. Beckner \cite{3} represented it as Stein-Weiss lemma and obtained the following theorem.
\begin{thm}\label{mian_01}
	Suppose $K$ is a non-negative kernel defined on $\mathbb{R}^n \times \mathbb{R}^n$ that satisfies continuous on any domain that excludes the point $(0,0)$, homogeneous of degree $-n, K(\delta u, \delta v)=\delta^{-n} K(u, v)$, and $K(R u, R v)=$ $K(u, v)$ for any $R \in S O(n)$. Then $T$ is a from $L^p(\mathbb{R}^n)$ to $L^p(\mathbb{R}^n)$ bounded integral operator defined by
	$$
	T f(x)=\int_{\mathbb{R}^n} K(x, y) f(y) d y
	$$
	with
	\begin{equation}\label{main_1}
		\|T f\|_{L^p\left(\mathbb{R}^n\right)} \leq A\|f\|_{L^p\left(\mathbb{R}^n\right)}
	\end{equation}
	holds for $1<p<\infty$, the optimal constant is given by
	\begin{equation}\label{main_2}
		A=\int_{\mathbb{R}^n} K\left(x, \hat{e}_1\right)|x|^{-n / p^{\prime}} d x
	\end{equation}
	and $\hat{e}_1$ is a unit vector in the first coordinate direction.
\end{thm}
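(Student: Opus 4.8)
The plan is to prove the two estimates $\|Tf\|_{L^p(\rn)}\le A\|f\|_{L^p(\rn)}$ and $\sup_{f\neq 0}\|Tf\|_{L^p(\rn)}/\|f\|_{L^p(\rn)}\ge A$ separately, so that together they pin the operator norm at exactly $A$. For the upper bound I would run a weighted H\"older (Schur-test) argument adapted to the homogeneity. Starting from $|Tf(x)|\le\int_{\rn}K(x,y)|f(y)|\,dy$, I would insert the factor $1=(|y|/|x|)^{-n/(pp')}(|y|/|x|)^{n/(pp')}$, split $K=K^{1/p'}K^{1/p}$, and apply H\"older with exponents $p',p$. The first factor becomes $\big(\int_{\rn}K(x,y)(|y|/|x|)^{-n/p}\,dy\big)^{1/p'}$; the substitution $y=|x|w$ together with homogeneity of degree $-n$ and $SO(n)$-invariance collapses it to the constant $C_1:=\int_{\rn}K(\hat e_1,w)|w|^{-n/p}\,dw$, independent of $x$. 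Raising to the $p$-th power, integrating in $x$, and applying Tonelli, the remaining inner integral $\int_{\rn}K(x,y)|x|^{-n/p'}\,dx$ collapses (now via $x=|y|v$) to $C_2:=\int_{\rn}K(v,\hat e_1)|v|^{-n/p'}\,dv=A$, yielding $\|Tf\|_{L^p}\le C_1^{1/p'}C_2^{1/p}\|f\|_{L^p}$.

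To turn this into the clean bound $A$, I would prove the identity $C_1=C_2$. Passing to polar coordinates and using the radial substitution $\rho\mapsto 1/\rho$ with the degree $-n$ homogeneity, both $C_1$ and $C_2$ take the common form $\int_0^\infty\sigma^{n/p-1}\,(\text{spherical integral})\,d\sigma$, with the spherical integrals $P(\sigma)=\int_{\sn}K(\sigma\hat e_1,\theta)\,d\theta$ and $Q(\sigma)=\int_{\sn}K(\sigma\phi,\hat e_1)\,d\phi$. Rotational invariance shows that $P(\sigma)$ is unchanged if the unit vector in the first slot is replaced by any other, and likewise $Q(\sigma)$ for the second slot; integrating each once more over $\sn$ and applying Fubini to the resulting double spherical integral of the single function $K(\sigma\,\cdot\,,\,\cdot\,)$ forces $P(\sigma)=Q(\sigma)$, hence $C_1=C_2=A$ and the upper bound is exactly $A$.

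For sharpness I would test against the truncated critical power, working with the bilinear pairing to keep the boundary terms symmetric: set $f_R(y)=|y|^{-n/p}\chi_{\{1\le|y|\le R\}}$ and $g_R(x)=|x|^{-n/p'}\chi_{\{1\le|x|\le R\}}$, so that $\|f_R\|_{L^p}\|g_R\|_{L^{p'}}=\omega_{n-1}\ln R$ with $\omega_{n-1}=|\sn|$. Since the same homogeneity computation gives $\int_{\rn}K(x,y)|y|^{-n/p}\,dy=C_1|x|^{-n/p}$ exactly (so $|y|^{-n/p}$ is a formal eigenfunction with eigenvalue $C_1$), the main term of $\langle Tf_R,g_R\rangle$ is $C_1\int_{\{1\le|x|\le R\}}|x|^{-n}\,dx=C_1\,\omega_{n-1}\ln R$, while the contributions of the excluded ranges $|y|<1$ and $|y|>R$ are $O(1)$. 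Dividing by $\|f_R\|_{L^p}\|g_R\|_{L^{p'}}$ and letting $R\to\infty$ gives $\|T\|\ge C_1=A$, which combined with the upper bound yields $\|T\|=A$.

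The main obstacle I anticipate is the sharpness direction: one must show rigorously that the tail and boundary contributions to $\langle Tf_R,g_R\rangle$ are genuinely lower order compared with the logarithmically divergent main term $\omega_{n-1}\ln R$. This needs quantitative control of $\int_{|y|<1}K(x,y)|y|^{-n/p}\,dy$ and $\int_{|y|>R}K(x,y)|y|^{-n/p}\,dy$ for $|x|$ in the bulk, which rests on the local integrability and decay of $K$ encoded in the convergence of the defining integral for $A$. The upper bound, by contrast, is essentially mechanical once the identity $C_1=C_2$ is in hand.
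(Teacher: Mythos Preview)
The paper does not actually prove this theorem. Theorem~1 is stated in the Introduction as background: immediately after the statement the authors write that the inequality was given by Stein and Weiss and that Beckner proved the constant $A$ is best, with citations to those works. The paper's own contributions begin with Theorem~2 on the Heisenberg group, so there is no in-paper proof of Theorem~1 to compare your proposal against.

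That said, your outline is the classical argument and is essentially correct. The Schur-test upper bound with the weight $(|y|/|x|)^{n/(pp')}$ and the identification $C_1=C_2=A$ via homogeneity plus rotation invariance is the standard route; the sharpness via truncated critical powers $|y|^{-n/p}\chi_{\{1\le|y|\le R\}}$ and the bilinear pairing is likewise standard, and your identification of the boundary-term control as the only delicate point is accurate. If you were asked to supply a proof the paper omits, this would be an acceptable one; but as a comparison exercise there is nothing in the paper to set it against.
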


The inequality (\ref{main_1}) was given by Stein and Weiss \cite{2}. In \cite{3} and \cite{4}, Beckner proved that the constant $A$ in (\ref{main_2}) is the best constant.
Inspired by Theorem \ref{mian_01}, we investigate a class of integral operators on Heisenberg group and obtained sharp bounds in weighted type spaces.

Let us first recall some basic knowledge about the Heisenberg group. The Heisenberg group $\mathbb{H}^n$ is a non-commutative nilpotent Lie group, with the underlying manifold $\mathbb{R}^{2n}\times\mathbb{R}$ with the group law
$$
x \circ y=\left(x_1+y_1, \ldots, x_{2 n}+y_{2 n}, x_{2 n+1}+y_{2 n+1}+2 \sum_{j=1}^n\left(y_j x_{n+j}-x_j y_{n+j}\right)\right)
$$
and
$$
\delta_r\left(x_1, x_2, \ldots, x_{2 n}, x_{2 n+1}\right)=\left(r x_1, r x_2, \ldots, r x_{2 n}, r^2 x_{2 n+1}\right), \quad r>0,
$$
where $x=(x_1,\cdots,x_{2n},x_{2n+1})$ and $y=(y_1,\cdots,y_{2n},y_{2n+1})$.
The Haar measure on $\mathbb{H}^n$ coincides with the usual Lebesgue measure on $\mathbb{R}^{2n+1}$. We denote the measure of any measurable set $E \subset \mathbb{H}^n$ by $|E|$.Then
$$
|\delta_r(E)|=r^Q|E|, d(\delta_r x)=r_Q d x,
$$
where $Q=2n+2$ is called the homogeneous dimension of $\mathbb{H}^n$.

The Heisenberg distance derived from the norm
$$
|x|_h=\left[\left(\sum_{i=1}^{2 n} x_i^2\right)^2+x_{2 n+1}^2\right]^{1 / 4},
$$
where $x=(x_1,x_2,\cdots,x_{2n},x_{2n+1})$,is given by
$$
d(p, q)=d(q^{-1} p, 0)=|q^{-1} p|_h.
$$
This distance $d$ is left-invariant in the sense that $d(p,q)$ remains unchanged when $p$ and $q$ are both left-translated by some fixed vector on $\mathbb{H}^n$.Furthermore,$d$ satisfies the triangular inequality\cite{6}
$$
d(p, q) \leq d(p, x)+d(x, q), \quad p, x, q \in \mathbb{H}^n.
$$
For $r>0$ and $x\in\mathbb{H}^n$,the ball and sphere with center $x$ and radius $r$ on $\mathbb{H}^n$ are given by
$$
B(x, r)=\{y \in \mathbb{H}^n: d(x, y)<r\},
$$
and
$$
S(x, r)=\{y \in \mathbb{H}^n: d(x, y)=r\},
$$
respectively.And we have
$$
|B(x, r)|=|B(0, r)|=\Omega_Q r^Q,
$$
where
$$
\Omega_Q=\frac{2 \pi^{n+\frac{1}{2}} \Gamma(n / 2)}{(n+1) \Gamma(n) \Gamma((n+1) / 2)}
$$
is the volume of the unit ball $B(0,1)$ on $\mathbb{H}^n$, $\omega_Q=Q \Omega_Q$ (see \cite{5}). For more details about Heisenberg group can be refer to \cite{7} and \cite{8}.

The weighted-type space on Euclidean space $H^\infty_\alpha(\mathbb{R}^n)$ with $\alpha>0$ consists all measurable functions $f$ satisfying
$$
\|f\|_{H^\infty_\alpha(\mathbb{R}^n)}:=\text{ess}\sup_{x\in\mathbb{R}^n}|x|^\alpha|f(x)|<\infty.
$$
The function $\|\cdot\|_{H^\infty_\alpha(\mathbb{R}^n)}$ is a norm on the space which is $dx$ almost everywhere equal.
In this paper, we will consider the weighted-type space on Heisenberg group. Our definition is as follow.
\begin{defn}
	The weighted-type space on Heisenberg group  $H^\infty_\alpha(\mathbb{H}^n)$ with $\alpha>0$ consists of all measurable functions $f$ satisfying
	$$
	\|f\|_{H^\infty_\alpha(\mathbb{H}^n)}:=\text{ess}\sup_{x\in\mathbb{H}^n}|x|_h^\alpha|f(x)|<\infty,
	$$
	where $\|\cdot\|_{H_\alpha^\infty}$ is a norm on the space which also is $dx$ almost everywhere equal.
\end{defn}
In harmonic analysis, the study of multilinear integral operators and their properties is often more desirable and interesting. In \cite{9}, Wu and Yan extened Theorem \ref{mian_01} to the case of multilinearity, now we utilize their promotional methods to give $m$-linear integral operator with the nonnegative kernels on Heisenberg group.
\begin{defn}
	Suppose that $K$ is a nonnegative kernel defined on $\mathbb{H}^{nm}$ and satisfies the homogenous degree $-mn$,
	$$
	K(|t|_h x, |t|_h y_1,\ldots,|t|_h y_m)=|t|_h^{-mn}K(x,y_1,\ldots,y_m),
	$$
	and
	$$
	K(R x,R y_1,\ldots,R y_m)=K(x,y_1,\ldots,y_m)
	$$
	for any $R\in SO(n)$, then $K$ denotes the kernel of
	\begin{equation}\label{main_3}
		T_m(f_1,\ldots,f_m)(x)=\int_{\mathbb{H}^{nm}}K(x,y_1,\ldots,y_m)f_1(y_1)\cdots f_m(y_m) dy_1\cdots dy_m.
	\end{equation}
	Taking $e_1$ is a unit vector in the first coordinate direction in $\mathbb{H}^n$, then we have
	\begin{equation}\label{main_4}
		T_m(f_1,\ldots,f_m)(x)=\int_{\mathbb{H}^{nm}}K(e_1,y_1,\ldots,y_m)f_1(\delta_{|x|_h} y_1)\cdots f_m(\delta_{|x|_h}y_m) dy_1\cdots dy_m.
	\end{equation}
\end{defn}
Next, we will provide our main result of mulitilinear integral operator with kernel $K$ in weighted type space on Heisenberg group.

%------------------------section 2------------------------
\begin{center}
\section{Sharp bound for m-linear integral operator}\label{sec2}
\end{center}
\par

\begin{thm}\label{main_02}
	Let $\alpha>0$ and $\alpha=\alpha_1+\cdots+\alpha_m$ with $\alpha_j>0$ for any $j=1,\ldots,m$, if
	\begin{equation}\label{main_5}
		H_m:=\int_{\mathbb{H}^{nm}}K(e_1,y_1,\ldots,y_m)\prod_{i=1}^m|y_i|_h^{-\alpha_i}dy_1,\ldots d y_m<\infty,
	\end{equation}
	then $T(f_1,\ldots f_m)(x)$ is bounded from $H_{a_1}^\infty(\mathbb{H}^n)\times\cdots\times H_{\alpha_m}^\infty(\mathbb{H}^n)$ to $H_\alpha^\infty(\mathbb{H}^n)$.
	
	Moreover, if condition (\ref{main_4}) is satisfied, then the following formula for the norm of the operator hold
	\begin{equation}\label{main_12}
		\|T_m\|_{\prod_{i=1}^m H^\infty_{\alpha_i}(\mathbb{H}^n)\rightarrow H^\infty_\alpha(\mathbb{H}^n)}=H_m.
	\end{equation}
\end{thm}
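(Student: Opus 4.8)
The plan is to establish the norm identity \eqref{main_12} by proving the two matching inequalities $\|T_m\|_{\prod_{i=1}^m H^\infty_{\alpha_i}(\mathbb{H}^n)\to H^\infty_\alpha(\mathbb{H}^n)}\le H_m$ and $\|T_m\|_{\prod_{i=1}^m H^\infty_{\alpha_i}(\mathbb{H}^n)\to H^\infty_\alpha(\mathbb{H}^n)}\ge H_m$ separately, with the representation \eqref{main_4} serving as the central tool in both directions. The upper bound will simultaneously deliver the boundedness claimed in the first part of the theorem, while the lower bound will be secured by exhibiting an explicit extremal family rather than a merely approximating one.

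For the upper bound I would start from the defining property of the weighted-type norm: for each $f_i\in H^\infty_{\alpha_i}(\mathbb{H}^n)$ one has $|f_i(y)|\le \|f_i\|_{H^\infty_{\alpha_i}(\mathbb{H}^n)}\,|y|_h^{-\alpha_i}$ for almost every $y$. Substituting this into \eqref{main_4}, using the nonnegativity of $K$ and the fact that the Heisenberg norm is homogeneous of degree one under dilation, $|\delta_{|x|_h}y_i|_h=|x|_h\,|y_i|_h$, I would extract a factor $|x|_h^{-\alpha_i}$ from each argument. Since $\alpha=\alpha_1+\cdots+\alpha_m$, these combine into $|x|_h^{-\alpha}$, and the remaining integral is precisely $H_m$ as defined in \eqref{main_5}. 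This yields, for almost every $x$,
\[
|x|_h^{\alpha}\,|T_m(f_1,\ldots,f_m)(x)|\le H_m\prod_{i=1}^m\|f_i\|_{H^\infty_{\alpha_i}(\mathbb{H}^n)},
\]
and taking the essential supremum over $x$ gives $\|T_m(f_1,\ldots,f_m)\|_{H^\infty_\alpha(\mathbb{H}^n)}\le H_m\prod_{i=1}^m\|f_i\|_{H^\infty_{\alpha_i}(\mathbb{H}^n)}$, hence both the boundedness and $\|T_m\|\le H_m$.

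For the lower bound the point is that the estimate above is saturated by the power functions $f_i(y)=|y|_h^{-\alpha_i}$. Each lies in the corresponding space with $\|f_i\|_{H^\infty_{\alpha_i}(\mathbb{H}^n)}=1$, since $|y|_h^{\alpha_i}|y|_h^{-\alpha_i}\equiv 1$. Inserting them into \eqref{main_4} and again using $|\delta_{|x|_h}y_i|_h=|x|_h\,|y_i|_h$, all dependence on $x$ factors out and I obtain $T_m(f_1,\ldots,f_m)(x)=H_m\,|x|_h^{-\alpha}$. Thus $|x|_h^{\alpha}T_m(f_1,\ldots,f_m)(x)=H_m$ is constant in $x$, its essential supremum equals $H_m$, and so $\|T_m\|\ge H_m$. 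Combining the two inequalities yields \eqref{main_12}.

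The step I expect to require the most care is the passage to the representation \eqref{main_4}, that is, verifying that the operator written against the fixed base point $e_1$ with dilated arguments genuinely coincides with \eqref{main_3}. This rests on combining the rotational invariance $K(Rx,Ry_1,\ldots,Ry_m)=K(x,y_1,\ldots,y_m)$, the homogeneity of $K$, and the change of variables $y_i\mapsto\delta_{|x|_h}y_i$, whose Jacobian on $\mathbb{H}^n$ scales the measure by $|x|_h^{Q}$ per variable. One must confirm that the scaling exponents coming from the homogeneity of $K$ and from these Jacobian factors balance correctly, so that the substitution is consistent and the absolute convergence guaranteed by $H_m<\infty$ is preserved under all the manipulations above. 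Once \eqref{main_4} is secured, both the upper and lower bounds reduce to the elementary homogeneity computations already described.
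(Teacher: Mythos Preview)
Your proposal is correct and follows essentially the same approach as the paper: both arguments use the dilated representation \eqref{main_4} together with the pointwise bound $|f_i(y)|\le \|f_i\|_{H^\infty_{\alpha_i}}|y|_h^{-\alpha_i}$ and the homogeneity $|\delta_{|x|_h}y_i|_h=|x|_h\,|y_i|_h$ to obtain the upper bound, and then test on the extremal family $f_{\alpha_j}(y)=|y|_h^{-\alpha_j}$ (with unit norm in $H^\infty_{\alpha_j}$) to obtain the matching lower bound. Your closing remark about verifying the passage from \eqref{main_3} to \eqref{main_4} is a point the paper simply asserts in its Definition~2 rather than checks, so your treatment is, if anything, slightly more careful.
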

\begin{proof}[Proof of Theorem \ref{main_02}]
	For convenience, we take
	$$
	T_m(f_1,\ldots,f_m)(x)=\int_{\mathbb{H}^{nm}}K(e_1,y_1,\ldots,y_m)\prod_{i=1}^m f_i(\delta_{|x|_h}y_j)dy_1\cdots dy_m.
	$$
	Using the above identity and the definition of weighted-type space on Heisenberg group $H^\infty_\alpha(\mathbb{H}^n)$, we can infer that
	$$
	\begin{aligned}
		&\|T_m\|_{H^\infty_\alpha(\mathbb{H}^n)}\\
		=&\text{ess}\sup_{x\in\mathbb{H}^n}|x|_h^\alpha\left|\int_{\mathbb{H}^{nm}}K(e_1,y_1,\ldots,y_m)\prod_{i=1}^m f_i(\delta_{|x|_h}y_i)d y_1\cdots dy_m\right|\\
		\leq&\prod_{i=1}^m\|f_i\|_{H^\infty_{\alpha_i}(\mathbb{H}^n)}\sup_{x\in\mathbb{H}^n\backslash\{0\}}|x|_h^\alpha\left|\int_{\mathbb{H}^{nm}}K(e_1,y_1,\ldots,y_m)\prod_{i=1}^m|\delta_{|x|_h}y_i|_h^{-\alpha_i}dy_1\cdots y_m\right|\\
		=&\prod_{i=1}^m\|f_i\|_{H^\infty_{\alpha_i}(\mathbb{H}^n)}\sup_{x\in\mathbb{H}^n\backslash\{0\}}|x|_h^\alpha\left|\int_{\mathbb{H}^{nm}}K(e_1,y_1,\ldots,y_m)\prod_{i=1}^m|x|_h^{-\alpha_i} |y_i|_h^{-\alpha_i}dy_1\cdots y_m\right|\\
		=&H_m\prod_{i=1}^m\|f_i\|_{H^\infty_{\alpha_i}(\mathbb{H}^n)}
	\end{aligned}
	$$
	for every $(f_1,\ldots,f_m)\in \prod_{i=1}^m H^\infty_{\alpha_i}(\mathbb{H}^n)$, from which by taking the supremum in $H_{\alpha_i}^{\infty}(\mathbb{H}^n)$, it follows that
	\begin{equation}\label{main_11}
		\|T_m\|_{\prod_{i=1}^mH^\infty_{\alpha_i}(\mathbb{H}^n)\rightarrow H^\infty_{\alpha}(\mathbb{H}^n)}\leq H_m,
	\end{equation}
	and we obtain the boundedness of $T_m$ on $H^\infty_\alpha(\mathbb{H}^n)$.
	
	Then by taking
	\begin{equation}\label{main_6}
		f_{\alpha_{j}}(x)=
		\begin{cases}
			1/|x|^{\alpha_{j}}_{h},       &       {x\neq 0},\\
			0,  &   {x=0},
		\end{cases}
	\end{equation}
	it is clear that for any $j=1,\ldots,y_m$, we have
	\begin{equation}\label{main_8}
		\|f_{\alpha_j}\|_{H^\infty_{\alpha_j}(\mathbb{H}^n)}=1.
	\end{equation}
	Bring (\ref{main_6}) into identity (\ref{main_4}) and through simple calculations, we have
	\begin{equation}\label{main_7}
		\begin{aligned}
			&|x|_h^\alpha\left|\int_{\mathbb{H}^{nm}}K(e_1,y_1,\ldots,y_m)\prod_{j=1}^mf_{\alpha_j}(\delta_{|x|_h}y_j)dy_1\cdots dy_m\right|\\
			=&|x|_h^\alpha\left|\int_{\mathbb{H}^{nm}}K(e_1,y_1,\ldots,y_m)\prod_{j=1}^m|\delta_{|x|_h}y_j|_h^{-\alpha_j}dy_1\cdots dy_m\right|\\
			=&\int_{\mathbb{H}^{nm}}K(e_1,y_1,\ldots,y_m)\prod_{j=1}^m|y_j|_h^{-\alpha_j}dy_1\cdots y_m=H_m
		\end{aligned}
	\end{equation}
	for $x\neq 0$, where we used the condition $\alpha=\alpha_1+\cdots+\alpha_m$.
	From (\ref{main_7}) and by using the definition of norm in the weighted-type space on Heisenberg group $H^\infty_\alpha(\mathbb{H}^n)$, we can infer that
	\begin{equation}\label{main_9}
		\begin{aligned}
			&\|T_m\|_{H^\infty_\alpha(\mathbb{H}^n)}\\
			=&\text{ess}\sup_{x\in\mathbb{H}^n}|x|_h^\alpha\left|\int_{\mathbb{H}^{nm}}K(e_1,y_1,\ldots,y_m)\prod_{j=1}^m f_{\alpha_j}(\delta_{\alpha_j}y_j)dy_1\cdots dy_m\right|=H_m.
		\end{aligned}
	\end{equation}
	Combining (\ref{main_8}) with (\ref{main_9}) it follows that
	\begin{equation}\label{main_10}
		\|T_m\|_{\prod_{j=1}^m H^\infty_{\alpha_j}(\mathbb{H}^n)\rightarrow H^\infty_{\alpha}(\mathbb{H}^n)}\geq H_m.
	\end{equation}
	Through (\ref{main_11}) and (\ref{main_10}), it  implies that (\ref{main_12}) holds. We have finished the proof of Theorem \ref{main_02}.
\end{proof}

\vspace{0.1cm}

%------------------------section 3------------------------
\begin{center}
\section{Sharp bound for  Hardy operator}\label{sec3}
\end{center}

In this section, we will study $m$-linear $n$-dimensional Hardy operator on Heisenberg group and obtain its sharp bound in weighted-type space on Heisenberg group.

The $n$-dimensional Hardy operator on Heisenberg group is defined by Wu and Fu \cite{10}
\begin{equation}\label{main_331}
	Hf(x):=\frac{1}{|B(0,|x|_h)|}\int_{B(0,|x|_h)}f(y)dy, x\in\mathbb{H}^n\backslash\{0\},
\end{equation}
where $f$ be a locally integrable function on $\mathbb{H}^n$. And they proved $H$ is bounded from $L^p(\mathbb{H}^n)$ to $L^p(\mathbb{H}^n)$, $1<p\leq\infty$.
Moreover,
$$
\|H\|_{L^p(\mathbb{H}^n)\rightarrow L^p(\mathbb{H}^n)}=\frac{p}{p-1},1<p\leq\infty.
$$
In \cite{11}, Fu and Wu introduced the $m$-linear $n$-dimensional Hardy operator, which is defined by
$$
\mathfrak{H}_m(f_1,\ldots,f_m)(x)=\frac{1}{\Omega_n^m|x|^{mn}}\int_{|(y_1,\ldots,y_m)|<|x|}f_1(y_1)\cdots f_m(y_m),
$$
where $x\in\mathbb{R}^n\backslash\{0\}$ and $f_1,\ldots,f_m$ are nonnegative locally integrable functions on $\mathbb{R}^n$.
Inspired by \cite{11}, we  define the m-liner n-dimensional Hardy operator on Heisenberg group is as follow.
\begin{defn}
	Let $m$ be a positive integer and $f_1,\ldots,f_m$ be nonnegative locally integrable functions on $\mathbb{H}^n$, then
	\begin{equation}\label{main_21}
		\mathfrak{H}_m^h(f_1,\ldots,f_m)(x)=\frac{1}{\Omega_Q^m|x|_h^{mQ}}\int_{|(y_1,\ldots,y_m)|_h<|x|_h}f_1(y_1)\cdots f_m(y_m)dy_1\cdots dy_m,
	\end{equation}
	where $x\in\mathbb{H}^n\backslash\{0\}$.
\end{defn}
Now, we formulate our main results as follows.
\begin{thm}\label{mian_021}
	Let $m\in \mathbb{N}$, $f_i\in H^\infty_{\alpha_i}(\mathbb{H}^n)$, $\alpha>0$ and $\alpha=\alpha_1+\cdots+\alpha_m $ with $\alpha_j>0$ for any $j=1,\ldots,m.$ If
	\begin{equation}\label{main_23}
		\mathcal{A}^h_m=\frac{1}{\Omega_Q^m}\int_{|(y_1,\ldots,y_m)|_h<1}\prod_{i=1}^m|y_i|_h^{-\alpha_i}dy_1\cdots d y_m<\infty,
	\end{equation}
	then $\mathfrak{H}_m^h$ is bounded from $H_{a_1}^\infty(\mathbb{H}^n)\times\cdots\times H_{\alpha_m}^\infty(\mathbb{H}^n)$ to $H_\alpha^\infty(\mathbb{H}^n)$, and the following formula for the norm of the operator hold
	$$
	\|\mathfrak{H}_m^h\|_{\prod_{i=1}^m H^\infty_{\alpha_i}(\mathbb{H}^n)\rightarrow H^\infty_\alpha(\mathbb{H}^n)}=\mathcal{A}^h_m.
	$$
\end{thm}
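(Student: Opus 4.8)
The plan is to exhibit $\mathfrak{H}_m^h$ as the particular case of the $m$-linear operator $T_m$ of Theorem~\ref{main_02} whose kernel is
$$
K(x,y_1,\ldots,y_m)=\frac{1}{\Omega_Q^m|x|_h^{mQ}}\,\chi_{\{|(y_1,\ldots,y_m)|_h<|x|_h\}},
$$
and then to read off the conclusion; for self-containedness I would also record the two-sided estimate directly. First I would check that this $K$ meets the hypotheses behind Theorem~\ref{main_02}: it is plainly nonnegative, the defining ball $\{|(y_1,\ldots,y_m)|_h<|x|_h\}$ is unchanged under the rotations $R\in SO(n)$ so $K$ is rotation invariant, and the dilation relations $|\delta_r y_i|_h=r|y_i|_h$ together with $|\delta_r(E)|=r^Q|E|$ give the required homogeneity. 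Specializing to $x=e_1$, where $|e_1|_h=1$, the quantity $H_m$ of (\ref{main_5}) becomes exactly $\frac{1}{\Omega_Q^m}\int_{|(y_1,\ldots,y_m)|_h<1}\prod_{i=1}^m|y_i|_h^{-\alpha_i}\,dy=\mathcal{A}^h_m$, so the hypothesis (\ref{main_23}) is precisely the hypothesis (\ref{main_5}), and the representation (\ref{main_4}) holds for $K$ after a dilation change of variables.

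For the upper bound I would proceed directly as in the proof of Theorem~\ref{main_02}. Using $|f_i(y_i)|\le\|f_i\|_{H^\infty_{\alpha_i}(\mathbb{H}^n)}|y_i|_h^{-\alpha_i}$ almost everywhere and the substitution $y_i=\delta_{|x|_h}z_i$ (so that $dy_i=|x|_h^Q\,dz_i$, $|y_i|_h=|x|_h|z_i|_h$, and the region $|(y_1,\ldots,y_m)|_h<|x|_h$ turns into $|(z_1,\ldots,z_m)|_h<1$), the Jacobian $|x|_h^{mQ}$ cancels the prefactor $|x|_h^{-mQ}$, the product $\prod_i|x|_h^{-\alpha_i}$ collapses to $|x|_h^{-\alpha}$ via $\alpha=\alpha_1+\cdots+\alpha_m$, and one arrives at
$$
|x|_h^{\alpha}\bigl|\mathfrak{H}_m^h(f_1,\ldots,f_m)(x)\bigr|\le\mathcal{A}^h_m\prod_{i=1}^m\|f_i\|_{H^\infty_{\alpha_i}(\mathbb{H}^n)}.
$$
Taking the essential supremum in $x$ establishes boundedness and the bound $\|\mathfrak{H}_m^h\|\le\mathcal{A}^h_m$.

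For sharpness I would test against the extremizers $f_{\alpha_j}(x)=|x|_h^{-\alpha_j}$ for $x\neq0$ and $0$ at the origin, as in (\ref{main_6}), each of which satisfies $\|f_{\alpha_j}\|_{H^\infty_{\alpha_j}(\mathbb{H}^n)}=1$. The same change of variables then gives $|x|_h^{\alpha}\mathfrak{H}_m^h(f_{\alpha_1},\ldots,f_{\alpha_m})(x)=\mathcal{A}^h_m$, a constant independent of $x$, whose essential supremum is $\mathcal{A}^h_m$; hence $\|\mathfrak{H}_m^h\|\ge\mathcal{A}^h_m$, and combining with the upper bound yields the stated norm identity.

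The only genuinely delicate point is the bookkeeping of the change of variables on $(\mathbb{H}^n)^m$: I must confirm that the product norm $|(y_1,\ldots,y_m)|_h$ is homogeneous of degree one under the simultaneous dilation $y_i\mapsto\delta_r y_i$, so that the unit-ball domain is exactly reproduced, and that the total Jacobian is $|x|_h^{mQ}$ with $Q=2n+2$ the homogeneous dimension. Once the scaling is pinned down, the rest is a direct transcription of the argument for Theorem~\ref{main_02}.
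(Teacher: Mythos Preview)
Your proposal is correct and follows essentially the same route as the paper: the upper bound via the dilation change of variables $y_i=\delta_{|x|_h}z_i$ and the sharpness via the test functions $f_{\alpha_j}(x)=|x|_h^{-\alpha_j}$ are exactly the paper's argument, which it presents as a brief transcription of the proof of Theorem~\ref{main_02}. Your additional framing of $\mathfrak{H}_m^h$ as the special case of $T_m$ with kernel $K(x,y_1,\ldots,y_m)=\Omega_Q^{-m}|x|_h^{-mQ}\chi_{\{|(y_1,\ldots,y_m)|_h<|x|_h\}}$ is a clean observation the paper leaves implicit, but it does not alter the substance of the proof.
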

According to Theorem \ref{mian_021}, we have the following result holds.
\begin{cor}\label{main_cor1}
	Let $m\in\mathbb{N}, \alpha>0$ and $\alpha=\alpha_1+\cdots+\alpha_m $ with $\alpha_i>0$ for any $i=1,\ldots,m$.Then the operator $\mathfrak{H}_m^h$ is bounded from $H_{a_1}^\infty(\mathbb{H}^n)\times\cdots\times H_{\alpha_m}^\infty(\mathbb{H}^n)$ to $H_\alpha^\infty(\mathbb{H}^n)$ if and only if (\ref{mian_021}) hold. Moreover, if (\ref{main_23}) hold, the following formula holds
	$$
	\mathcal{A}^h_m=\frac{2Q^m}{2^m(mQ-\alpha)}\frac{\prod_{i=1}^m\Gamma(\frac{Q}{2}-\frac{\alpha_i}{2})}{\Gamma(\frac{mQ}{2}-\frac{\alpha}{2})}.
	$$
\end{cor}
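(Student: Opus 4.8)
The plan is to split the statement into the boundedness characterization and the explicit evaluation of $\mathcal{A}_m^h$. The ``if'' part, together with the norm identity $\|\mathfrak{H}_m^h\|=\mathcal{A}_m^h$, is furnished directly by Theorem \ref{mian_021}, so only the ``only if'' implication and the closed form remain. For the ``only if'' direction I would test the operator on the extremal functions $f_{\alpha_i}(x)=|x|_h^{-\alpha_i}$ from (\ref{main_6}), each having unit norm in $H^\infty_{\alpha_i}(\mathbb{H}^n)$. Substituting $y_i=\delta_{|x|_h}z_i$ (so that $dy_i=|x|_h^{Q}\,dz_i$, $|y_i|_h=|x|_h|z_i|_h$, and the region $|(y_1,\ldots,y_m)|_h<|x|_h$ becomes $|(z_1,\ldots,z_m)|_h<1$) collapses the scaling and gives $\mathfrak{H}_m^h(f_{\alpha_1},\ldots,f_{\alpha_m})(x)=\mathcal{A}_m^h\,|x|_h^{-\alpha}$, whose $H^\infty_\alpha$-norm is exactly $\mathcal{A}_m^h$; boundedness of $\mathfrak{H}_m^h$ therefore forces $\mathcal{A}_m^h<\infty$.

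It then remains to evaluate
$$\mathcal{A}_m^h=\frac{1}{\Omega_Q^m}\int_{|(y_1,\ldots,y_m)|_h<1}\prod_{i=1}^m|y_i|_h^{-\alpha_i}\,dy_1\cdots dy_m.$$
Because the integrand depends on each $y_i$ only through $r_i=|y_i|_h$ and the constraint reads $\sum_i r_i^2<1$, I would integrate out the angular variables using the Heisenberg polar formula $\int_{\mathbb{H}^n}g(|y|_h)\,dy=\omega_Q\int_0^\infty g(r)r^{Q-1}\,dr$ with $\omega_Q=Q\Omega_Q$, which factorizes over the $m$ variables and yields
$$\mathcal{A}_m^h=\frac{\omega_Q^m}{\Omega_Q^m}\int_{\{r_i>0,\ \sum_i r_i^2<1\}}\prod_{i=1}^m r_i^{\,Q-\alpha_i-1}\,dr_1\cdots dr_m.$$
The substitution $s_i=r_i^2$ (contributing a Jacobian factor $2^{-m}$) recasts the remaining integral as the classical Dirichlet integral over the simplex $\{s_i>0,\ \sum_i s_i<1\}$ with exponents $p_i=\tfrac{Q-\alpha_i}{2}$, namely $\int\prod_i s_i^{p_i-1}\,ds=\prod_i\Gamma(p_i)\big/\Gamma\!\big(1+\sum_i p_i\big)$. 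Using $\alpha=\sum_i\alpha_i$ gives $\sum_i p_i=\tfrac{mQ-\alpha}{2}$, and then $\omega_Q^m/\Omega_Q^m=Q^m$ together with $\Gamma\!\big(1+\tfrac{mQ-\alpha}{2}\big)=\tfrac{mQ-\alpha}{2}\,\Gamma\!\big(\tfrac{mQ}{2}-\tfrac{\alpha}{2}\big)$ collapses all constants to the asserted value $\frac{2Q^m}{2^m(mQ-\alpha)}\frac{\prod_{i=1}^m\Gamma(\frac{Q}{2}-\frac{\alpha_i}{2})}{\Gamma(\frac{mQ}{2}-\frac{\alpha}{2})}$.

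The main obstacle is not any single identity but the bookkeeping of the reduction: one must fix the precise meaning of the joint norm as $|(y_1,\ldots,y_m)|_h=\big(\sum_i|y_i|_h^2\big)^{1/2}$ so that the domain becomes the Euclidean-type ball $\sum_i r_i^2<1$ in the radii, justify that the spherical integrations genuinely separate once the domain is written in the $r_i$, and track the constants $\omega_Q$, $\Omega_Q$ and the Jacobian $2^{-m}$ carefully so that, after the Dirichlet evaluation and one application of the functional equation for $\Gamma$, they combine into $2Q^m/[2^m(mQ-\alpha)]$. The finiteness conditions $p_i>0$ (i.e. $\alpha_i<Q$) needed for the Dirichlet formula are exactly guaranteed by $\mathcal{A}_m^h<\infty$, which is why the characterization and the closed form fit together consistently.
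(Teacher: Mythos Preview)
Your proposal is correct and, for the explicit evaluation of $\mathcal{A}_m^h$, takes a genuinely different route from the paper. After the common step of passing to radial variables $r_i=|y_i|_h$ over the region $\sum_i r_i^2<1$, the paper proceeds by cases: it treats $m=1$ and $m=2$ separately, the latter via a further polar substitution in the $(r_1,r_2)$-plane that produces a one-dimensional Beta integral, and then indicates that $m\ge 3$ follows by iterating the same device. You instead substitute $s_i=r_i^2$ once and for all and recognize the resulting integral over the simplex $\{s_i>0,\ \sum_i s_i<1\}$ as the classical Dirichlet integral $\int\prod_i s_i^{p_i-1}\,ds=\prod_i\Gamma(p_i)/\Gamma(1+\sum_i p_i)$ with $p_i=(Q-\alpha_i)/2$. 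This handles all $m$ uniformly and makes the bookkeeping of constants transparent; the paper's iterated-Beta computation is more hands-on but requires the case split and a somewhat sketchy induction. You also supply the ``only if'' direction explicitly by testing on the extremals $f_{\alpha_i}=|x|_h^{-\alpha_i}$, which the paper does not spell out for the corollary. Your identification of $|(y_1,\ldots,y_m)|_h$ with $(\sum_i|y_i|_h^2)^{1/2}$ matches the paper's implicit convention, as can be read off from its $m=2$ computation.
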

Now, we begin to prove our result.
\begin{proof}[Proof of Theorem \ref{mian_021}]
	The proof process in this section is similar to that of Theorem \ref{main_02}, so we only provide a brief proof and focus on the calculation of the optimal constant.
	
	Combining (\ref{main_21}) with the definition of weighted-type space on Heisenberg group $H^\infty_\alpha(\mathbb{H}^n)$, we have
	$$
	\begin{aligned}
		\|\mathfrak{H}_m^h\|_{H^\infty_\alpha(\mathbb{H}^n)}&=\text{ess}\sup_{x\in\mathbb{H}^n}|x|_h^\alpha\left|\frac{1}{\Omega_Q^m|x|_h^{mQ}}\int_{|(y_1,\ldots,y_m)|_h<|x|_h}f_1(y_1)\cdots f_m(y_m)dy_1\cdots dy_m\right|\\
		&=\text{ess}\sup_{x\in\mathbb{H}^n}|x|_h^\alpha\left|\frac{1}{\Omega_Q^m} \int_{|(z_1,\ldots,z_m)|_h<1}f_1(\delta_{|x|_h}z_1)\cdots f_m(\delta_{|x|_h}z_m)dz_1\cdots dz_m\right|\\
		&\leq\prod_{i=1}^m\|f_i\|_{H^\infty_{\alpha_i}(\mathbb{H}^n)}\sup_{x\in\mathbb{H}^n\backslash\{0\}}|x|_h^\alpha\left|\frac{1}{\Omega_Q^m}\int_{|(y_1,\ldots,y_m)|_h<1}\prod_{i=1}^m|x|_h^{-\alpha}|y_i|_h^{-\alpha_i}dy_1\cdots dy_m\right|\\
		&=\mathcal{A}_m^h\prod_{i=1}^m\|f_i\|_{H^\infty_{\alpha_i}(\mathbb{H}^n)}.
	\end{aligned}
	$$
	This indicates for every $f_i\in\prod_{i=1}^m(H^\infty_{\alpha_i}),i=1,\ldots,m$, by taking the supremum in $H^\infty_{\alpha_i}(\mathbb{H}^n)$ that
	\begin{equation}\label{main_211}
		\|\mathfrak{H}_m^h\|_{\prod_{i=1}^mH^\infty_{\alpha_i}(\mathbb{H}^n)\rightarrow H^\infty_{\alpha}(\mathbb{H}^n)}\leq \mathcal{A}_m^h.
	\end{equation}
	Next, we take $f_{\alpha_j}(x)$ defined by (\ref{main_6}), which satisfies (\ref{main_8}).
	Using (\ref{main_21}), we can obtain
	\begin{equation}\label{main_222}
		|x|_h^\alpha\left|\frac{1}{\Omega_Q^m|x|_h^{mQ}}\int_{|(y_1,\ldots,y_m)|_h<|x|_h}f_{\alpha_1}(y_1)\cdots f_{\alpha_m}(y_m)dy_1\cdots dy_m\right|=\mathcal{A}_m^h.
	\end{equation}
	Combining (\ref{main_8}) with (\ref{main_222}), it follows that
	$$
	\|\mathfrak{H}_m^h\|_{\prod_{i=1}^mH^\infty_{\alpha_i}(\mathbb{H}^n)\rightarrow H^\infty_{\alpha}(\mathbb{H}^n)}\geq \mathcal{A}_m^h.
	$$
	Thus, we have finished the proof of Theorem \ref{mian_021}.
\end{proof}
In order to obtain the sharp constant, we learned and borrowed the method in \cite{11}.
For convenience, we take several special cases for separate calculations, and the general situation is a summary of these special cases.\\
\textbf{Case 1 when $m=1$.}

This situation returns to equation (\ref{main_331}), here we provide another form of definition, then its norm of sharp bound is
$$
\mathcal{A}^h_1=\frac{1}{\Omega_Q}\int_{|y|_h<1} |y|_h^{-\alpha} dy<\infty.
$$
Using the polar coordinate transformation formula, we have
$$
\begin{aligned}
	\mathcal{A}^h_1&=\frac{1}{\Omega_Q}\int_{|y|_h<1} |y|_h^{-\alpha} dy\\
	&=\frac{1}{\Omega_Q}\int_0^1\int_{S(0,1)} r^{-\alpha+Q-1}dr dy^{'}\\
	&=\frac{\omega_Q}{\Omega_Q}\int_0^1 r^{-\alpha+Q-1}d r\\
	&=\frac{Q}{Q-\alpha}.
\end{aligned}
$$
At this point, we have obtained the sharp constant when $m=1$, which is
\begin{equation}\label{case1}
	\|\mathfrak{H}_1^h\|_{H^\infty_\alpha(\mathbb{H}^n)}=\frac{Q}{Q-\alpha}.
\end{equation}\\
\textbf{Case 2 when $m=2$.}

According to  Theorem \ref{mian_021} and the polar coordinate transformation formula, we have
$$
\begin{aligned}
	\mathcal{A}^h_2&=\frac{1}{\Omega_Q^2}\int_{|(y_1,y_2)|_h<1}|y_1|_h^{-\alpha_1}|y_2|_h^{-\alpha_2}dy_1 d y_2\\
	&=\frac{1}{\Omega_Q^2}\int_0^1\int_0^1\int_{S(0,1)}\int_{S(0,1)}r_1^{-\alpha_1+Q-1}r_2^{-\alpha_2+Q-1}d r_1 d r_2 d y_1^{'}d y_2^{'}\\
	&=\frac{\omega^2_Q}{\Omega^2_Q}\frac{1}{2Q-\alpha}\int_0^1(1-t^2)^{\frac{1}{2}(-\alpha_1+Q-1)}t^{-\alpha_2+Q-1}(1-t^2)^{-\frac{1}{2}}d t\\
	&=\frac{\omega^2_Q}{\Omega^2_Q}\frac{1}{4Q-2\alpha}\int_0^1(1-x)^{\frac{1}{2}(-\alpha_1+Q)-1}x^{\frac{1}{2}(-\alpha_2+Q)-1} d x\\
	&=\frac{Q^2}{2(2Q-\alpha)}B(\frac{Q}{2}-\frac{\alpha_1}{2},\frac{Q}{2}-\frac{\alpha_2}{2})\\
	&=\frac{Q^2}{2(2Q-\alpha)}\frac{\Gamma(\frac{Q}{2}-\frac{\alpha_1}{2})\Gamma(\frac{Q}{2}-\frac{\alpha_2}{2})}{\Gamma(Q-\frac{\alpha}{2})}.
\end{aligned}
$$
This indicates that when $m=2$, the sharp constant is
\begin{equation}\label{case2}
	\mathcal{A}^h_2=\frac{Q^2}{2(2Q-\alpha)}\frac{\Gamma(\frac{Q}{2}-\frac{\alpha_1}{2})\Gamma(\frac{Q}{2}-\frac{\alpha_2}{2})}{\Gamma(Q-\frac{\alpha}{2})}
\end{equation}\\
\textbf{Case 3 when $m \geq 3$.}

The proof of case when $m\geq3$ is similar to that of case 2, we can continue to use the same method.
\begin{equation}\label{case3}
	\begin{aligned}
		\mathcal{A}^h_m=&\frac{1}{\Omega_Q^m}\int_{|(y_1,\ldots,y_m)|_h<1}\prod_{i=1}^m|y_i|_h^{-\alpha_i}dy_1\cdots d y_m\\
		=&\frac{1}{\Omega_Q^m}\int_0^1\cdots\int_0^1\int_{(S(0,1))
			^m}\prod_{i=1}^m r_i^{-\alpha_i+Q-1}d r_1\cdots d r_m d y_1^{'}\cdots dy_m^{'}\\
		=&\frac{\omega_Q^m}{\Omega_Q^m}\frac{2}{2^m(mQ-\alpha)}\int_0^1\cdots\int_0^1(1-x)^{\frac{1}{2}(-\alpha_1+Q)-1}x^{\frac{1}{2}(-\alpha_2+Q)-1}\\
		&\times\cdots\times(1-x)^{\frac{1}{2}(-\alpha_{m-1}+Q)-1}x^{\frac{1}{2}(-\alpha_m+Q)-1}dx\\
		=&\frac{2Q^m}{2^m(mQ-\alpha)}\frac{\prod_{i=1}^m\Gamma(\frac{Q}{2}-\frac{\alpha_i}{2})}{\Gamma(\frac{mQ}{2}-\frac{\alpha}{2})}.
	\end{aligned}
\end{equation}
When we take $m=1$ and $m=2$ in (\ref{case3}), the result is the same as (\ref{case1}) and (\ref{case2}).
So we obtained the sharp constant for $\mathcal{A}_m^h$, which is
$$
\mathcal{A}^h_m=\frac{2Q^m}{2^m(mQ-\alpha)}\frac{\prod_{i=1}^m\Gamma(\frac{Q}{2}-\frac{\alpha_i}{2})}{\Gamma(\frac{mQ}{2}-\frac{\alpha}{2})}.
$$

\section{Sharp bounds for  Hardy-Littlewood-P\'{o}lya and
	Hilbert operators}
In this section, we study $m$-linear $n$-dimensional Hardy-Littlewood-P\'{o}lya and Hilbert operators in weighted-type space on Heisenberg group. First of all, let's recall their definitions and properties.

The $m$-linear $n$-dimensional Hardy-Littlewood-P\'{o}lya is defined by
$$
P_m(f_1,\ldots , f_m)(x):=\int_{\mathbb{R}^{nm}}\frac{f_1(y_1)\cdots f_m(y_m)}{[\text{max}(|x|^n,|y_1|^n,\ldots, |y_m|^n)]^m}dy_1\cdots dy_m, x\in\mathbb{R}^n\backslash\{0\}.
$$
When $n=m=1$, the linear Hardy-Littlewood-P\'{o}lya operator $P_1$ is considered in \cite{12}, Hardy obtained that the norm of Hardy-Littlewood-P\'{o}lya operator on $L^p(\mathbb{R}^+)(1<q<\infty)$, that is
$$
\|P_1\|_{L^p(\mathbb{R}^+\rightarrow L^p(\mathbb{R}^+))}=\frac{p^2}{p-1}.
$$
The Hilbert operator is the essential extension of the classical Hilbert's inequality , the $m$-linear $n$-dimensional Hilbert operator is defined by
$$
P^*_m(f_1,\ldots, y_m)(x):=\int_{\mathbb{R}^{nm}}\frac{f_1(y_1)\cdots f_m(y_m)}{(|x|^n+|y_1|^n+\cdots+|y_m|^n)^m}d y_1\cdots dy_m,x\in\mathbb{R}^n\backslash\{0\}.
$$
For $n=1$, the following is a know sharp estimate
$$
\int_0^{\infty} T_1^* f(x) g(x) d x \leq \frac{\pi}{\sin (\pi / p)}\|f\|_{L^p(0, \infty)}\|g\|_{L^{p^{\prime}(0, \infty)}}.
$$
Now, we give the definitions of $m$-linear $n$-dimensional Hardy-Littlewood-P\'{o}lya and Hilbert operators on Heisenberg group.
\begin{defn}
	Suppose that $f_1,\ldots,f_m$ be  nonnegative locally integrable functions on $\mathbb{H}^n$. The $m$-linear $n$-dimensional Hardy-Littlewood-P\'{o}lya operator is defined by
	\begin{equation}
		P^h_m(f_1,\ldots,f_m)(x)=\int_{\mathbb{H}^{nm}}\frac{f_1(y_1)\cdots f_m(y_m)}{[\text{max} (|x|_h^Q,|y_1|_h^Q,\ldots,|y_m|_h^Q)]^m}dy_1\cdots dy_m,x\in\mathbb{H}^n\backslash\{0\}.
	\end{equation}
\end{defn}
\begin{defn}
	Suppose that $f_1,\ldots,f_m$ be nonnegative locally integrable functions on $\mathbb{H}^n$. The $m$-linear $n$-dimensional Hilbert operator is defined by
	\begin{equation}
		P^{h*}_m(f_1,\ldots,f_m)(x)=\int_{\mathbb{H}^{nm}}\frac{f_1(y_1)\cdots f_m(y_m)}{(|x|^Q_h+|y_1|_h^Q+\cdots+|y_m|_h^Q)^m}dy_1\cdots d y_m,x\in\mathbb{H}^n\backslash\{0\}.
	\end{equation}
\end{defn}
Now, we formulate our main results as follows.
\begin{thm}\label{thm1}
	Let $m\in\mathbb{N}, f_i\in H^\infty_{\alpha_i}(\mathbb{H}^n), \alpha>0$ and $\alpha=\alpha_1+\cdots+\alpha_m$ with $\alpha_i>0$ for any $i=1,\ldots,m$. If
	\begin{equation}
		\mathcal{B}^h_m=\int_{\mathbb{H}^{nm}}\frac{\prod_{i=1}^m |y_i|_h^{-\alpha_i}}{[\max(1,|y_1|_h^Q,\ldots,|y_m|_h^Q)]^m}dy_1\cdots dy_m<\infty,
	\end{equation}
	then $P^h_m$ is bounded from $H_{a_1}^\infty(\mathbb{H}^n)\times\cdots\times H_{\alpha_m}^\infty(\mathbb{H}^n)$ to $H_\alpha^\infty(\mathbb{H}^n)$ if and only if the following formula for the norm of the operator hold
	$$
	\|P_m^h\|_{\prod_{i=1}^m H^\infty_{\alpha_i}(\mathbb{H}^n)\rightarrow H^\infty_\alpha(\mathbb{H}^n)}=\frac{mQ\omega_Q^m}{\alpha\prod_{j=1}^m(Q-\alpha_j)}.
	$$
\end{thm}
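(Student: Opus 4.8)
The plan is to read $P^h_m$ as the operator $T_m$ of Theorem \ref{main_02} attached to the kernel
$$
K(x,y_1,\ldots,y_m)=\frac{1}{[\max(|x|_h^Q,|y_1|_h^Q,\ldots,|y_m|_h^Q)]^m}.
$$
Because this $K$ depends only on the Heisenberg norms of its arguments, it is automatically nonnegative, continuous off the origin and invariant under the $SO(n)$-action, and replacing each argument $u$ by $\delta_t u$ scales every $|u|_h^Q$ by $t^Q$ and hence $K$ by $t^{-mQ}$, the required homogeneity. Evaluating at $e_1$ (where $|e_1|_h=1$) gives $K(e_1,y_1,\ldots,y_m)=[\max(1,|y_1|_h^Q,\ldots,|y_m|_h^Q)]^{-m}$, so the constant $H_m$ of Theorem \ref{main_02} is exactly $\mathcal{B}^h_m$. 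The boundedness together with $\|P^h_m\|=\mathcal{B}^h_m$ then follows verbatim from the proof of Theorem \ref{main_02}: the change of variables $y_i=\delta_{|x|_h}z_i$ puts $P^h_m$ in the form (\ref{main_4}), the bound $|f_i(\delta_{|x|_h}z_i)|\le\|f_i\|_{H^\infty_{\alpha_i}(\mathbb{H}^n)}|x|_h^{-\alpha_i}|z_i|_h^{-\alpha_i}$ with the cancellation $\alpha=\alpha_1+\cdots+\alpha_m$ gives the upper estimate, and inserting the unit-norm extremizers $f_{\alpha_j}$ of (\ref{main_6}) produces the matching lower bound; the latter step also shows that if $P^h_m$ is bounded then necessarily $\mathcal{B}^h_m<\infty$, which is the claimed equivalence.

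The substance of the theorem is the explicit evaluation of $\mathcal{B}^h_m$. First I would pass to polar coordinates on each copy of $\mathbb{H}^n$, writing $dy_i=r_i^{Q-1}\,dr_i\,d\sigma(y_i')$ with $\int_{S(0,1)}d\sigma=\omega_Q$; since the integrand depends only on the radii, the $m$ angular integrations contribute $\omega_Q^m$ and reduce the constant to
$$
\mathcal{B}^h_m=\omega_Q^m\int_0^\infty\cdots\int_0^\infty\frac{\prod_{i=1}^m r_i^{Q-1-\alpha_i}}{[\max(1,r_1^Q,\ldots,r_m^Q)]^m}\,dr_1\cdots dr_m.
$$
Next I would substitute $s_i=r_i^Q$, which turns the $i$-th radial factor into $\tfrac1Q\,s_i^{\gamma_i-1}$ with $\gamma_i=(Q-\alpha_i)/Q$, pulls out $Q^{-m}$, and leaves $[\max(1,s_1,\ldots,s_m)]^{-m}$ in the denominator.

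To resolve the maximum I would split $(0,\infty)^m$ into the $m+1$ regions according to which of $1,s_1,\ldots,s_m$ is largest. On the region where every $s_i<1$ the denominator is $1$ and the integral factors into $\prod_i\int_0^1 s_i^{\gamma_i-1}\,ds_i=\prod_i\gamma_i^{-1}$. On the region where a fixed $s_k\ge1$ is the maximum, the other variables range over $(0,s_k)$ and integrate to $\prod_{j\ne k}s_k^{\gamma_j}/\gamma_j$; collecting powers of $s_k$ leaves $\int_1^\infty s_k^{(\sum_j\gamma_j)-1-m}\,ds_k$, whose exponent equals $-\alpha/Q-1$ because $\sum_j\gamma_j=m-\alpha/Q$, so its value is $Q/\alpha$. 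Summing the $m+1$ contributions gives
$$
\mathcal{B}^h_m=\frac{\omega_Q^m}{Q^m\prod_j\gamma_j}\Big(1+\frac{Q}{\alpha}\sum_{k=1}^m\gamma_k\Big),
$$
and the key algebraic collapse $1+\frac{Q}{\alpha}\sum_k\gamma_k=1+\frac{Q}{\alpha}\big(m-\tfrac{\alpha}{Q}\big)=\frac{mQ}{\alpha}$ reduces this to $\frac{mQ\,\omega_Q^m}{\alpha\,Q^m\prod_j\gamma_j}=\frac{mQ\,\omega_Q^m}{\alpha\prod_j(Q-\alpha_j)}$, which is the asserted norm.

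I expect the main obstacle to be organizational rather than conceptual: correctly bookkeeping the $m+1$ regions and the inner integrations over $(0,s_k)$, and checking that convergence of each piece is governed precisely by $0<\alpha_j<Q$ for all $j$ together with $\alpha>0$. These are exactly the conditions encoded in $\mathcal{B}^h_m<\infty$, which is what makes the boundedness equivalence transparent and keeps every denominator $Q-\alpha_j$ and $\alpha$ positive. As a check before attacking general $m$, the case $m=1$ has only two regions and yields $\tfrac{\omega_Q}{Q}\big(\gamma_1^{-1}+Q/\alpha_1\big)=Q\omega_Q/\big(\alpha_1(Q-\alpha_1)\big)$, confirming the scheme.
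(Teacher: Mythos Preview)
Your proposal is correct and follows essentially the same route as the paper: reduce to the general Theorem~\ref{main_02} to get $\|P^h_m\|=\mathcal{B}^h_m$, then evaluate $\mathcal{B}^h_m$ by passing to radial variables and splitting $(0,\infty)^m$ into the $m+1$ regions determined by which of $1,|y_1|_h^Q,\ldots,|y_m|_h^Q$ is largest. The only cosmetic difference is that you introduce the substitution $s_i=r_i^Q$ and carry out the final algebraic collapse $1+\tfrac{Q}{\alpha}\sum_k\gamma_k=\tfrac{mQ}{\alpha}$ explicitly, whereas the paper computes each region directly in Heisenberg coordinates and treats $m=2$ and $m\ge 3$ separately.
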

\begin{thm}\label{thm2}
	Let $m\in\mathbb{N}$,$f_i\in H^\infty_{\alpha_i}(\mathbb{H}^n)$,$\alpha>0$ and $\alpha=\alpha_1+\cdots+\alpha_m$ with $\alpha_i>0$ for any $i=1,\ldots,m$. If
	\begin{equation}
		\mathcal{B}^{h*}_m=\int_{\mathbb{H}^{nm}}\frac{\prod_{i=1}^m |y_i|_h^{-\alpha_i}}{[ \max(1+|y_1|_h^Q+\cdots+|y_m|_h^Q)]^m}dy_1\cdots dy_m<\infty,
	\end{equation}
	then $P^h_m$ is bounded from $H_{a_1}^\infty(\mathbb{H}^n)\times\cdots\times H_{\alpha_m}^\infty(\mathbb{H}^n)$ to $H_\alpha^\infty(\mathbb{H}^n)$ if and only if the following formula for the norm of the operator hold
	$$
	\|P_m^{h*}\|_{\prod_{i=1}^m H^\infty_{\alpha_i}(\mathbb{H}^n)\rightarrow H^\infty_\alpha(\mathbb{H}^n)}=\left(\frac{2 \pi^{n+\frac{1}{2}} \Gamma(n / 2)}{(n+1) \Gamma(n) \Gamma((n+1)/2)}\right)^m\frac{\prod^m_{i=1}\Gamma(1-\frac{\alpha_i}{Q})\Gamma(\frac{\alpha}{Q})}{\Gamma(m)}.
	$$
\end{thm}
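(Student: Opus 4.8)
The plan is to recognize $P^{h*}_m$ as a special instance of the abstract $m$-linear operator $T_m$ from Theorem~\ref{main_02} and then to reduce the evaluation of its norm to a classical Dirichlet integral. First I would verify the three structural hypotheses for the Hilbert kernel $K(x,y_1,\ldots,y_m)=(|x|_h^Q+|y_1|_h^Q+\cdots+|y_m|_h^Q)^{-m}$: it is nonnegative; under the Heisenberg dilation it is homogeneous of degree $-mQ$, since $|\delta_t x|_h=t|x|_h$ gives $K(\delta_t x,\delta_t y_1,\ldots,\delta_t y_m)=t^{-mQ}K(x,y_1,\ldots,y_m)$; and because it depends on its arguments only through the norms $|x|_h,|y_i|_h$, it is invariant under the rotations in the statement. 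Evaluating at the unit vector $e_1$ gives $K(e_1,y_1,\ldots,y_m)=(1+|y_1|_h^Q+\cdots+|y_m|_h^Q)^{-m}$, so the constant $H_m$ of Theorem~\ref{main_02} is precisely $\mathcal{B}^{h*}_m$. Theorem~\ref{main_02} then yields both the boundedness under $\mathcal{B}^{h*}_m<\infty$ and the identity $\|P^{h*}_m\|_{\prod_i H^\infty_{\alpha_i}\to H^\infty_\alpha}=\mathcal{B}^{h*}_m$; the converse direction of the ``if and only if'' follows because the test functions $f_{\alpha_j}$ of \eqref{main_6} already force $\|P^{h*}_m\|\ge\mathcal{B}^{h*}_m$, so boundedness cannot hold unless $\mathcal{B}^{h*}_m<\infty$.

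It therefore remains only to evaluate $\mathcal{B}^{h*}_m$ in closed form. I would first pass to polar coordinates $y_i=\delta_{r_i}y_i'$ with $r_i=|y_i|_h$ and $y_i'\in S(0,1)$ in each of the $m$ factors. Since the integrand depends on the $y_i$ only through $r_i$, each angular integration contributes $\omega_Q$, leaving
\[
\mathcal{B}^{h*}_m=\omega_Q^m\int_0^\infty\!\!\cdots\!\int_0^\infty \frac{\prod_{i=1}^m r_i^{Q-1-\alpha_i}}{\bigl(1+\sum_{j=1}^m r_j^Q\bigr)^{m}}\,dr_1\cdots dr_m .
\]
Next I would substitute $s_i=r_i^{Q}$ in each variable; a short computation turns $r_i^{Q-1-\alpha_i}\,dr_i$ into $\tfrac1Q s_i^{-\alpha_i/Q}\,ds_i=\tfrac1Q s_i^{(1-\alpha_i/Q)-1}\,ds_i$, so that
\[
\mathcal{B}^{h*}_m=\frac{\omega_Q^m}{Q^m}\int_0^\infty\!\!\cdots\!\int_0^\infty \frac{\prod_{i=1}^m s_i^{\beta_i-1}}{\bigl(1+\sum_{j=1}^m s_j\bigr)^{m}}\,ds_1\cdots ds_m,\qquad \beta_i:=1-\frac{\alpha_i}{Q}.
\]

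The final step is the multivariate Dirichlet (Beta-type) identity
\[
\int_{(0,\infty)^m}\frac{\prod_{i=1}^m s_i^{\beta_i-1}}{\bigl(1+\sum_{j=1}^m s_j\bigr)^{\sigma}}\,ds=\frac{\prod_{i=1}^m\Gamma(\beta_i)\,\Gamma\!\bigl(\sigma-\sum_{i=1}^m\beta_i\bigr)}{\Gamma(\sigma)},
\]
applied with $\sigma=m$. Here $\sum_{i=1}^m\beta_i=m-\alpha/Q$, hence $\sigma-\sum_i\beta_i=\alpha/Q$, and using $\omega_Q=Q\Omega_Q$ together with the explicit value of $\Omega_Q$ recorded in the introduction gives exactly the asserted norm. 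I expect the main obstacle to be the rigorous justification of this last identity: one must verify the convergence hypotheses $\beta_i>0$ and $\sigma>\sum_i\beta_i$, i.e. $\alpha_i<Q$ for every $i$ and $\alpha>0$, which are precisely the conditions encoded by the assumption $\mathcal{B}^{h*}_m<\infty$, and one must justify the Fubini interchange and the change of variables underlying the Dirichlet formula (for instance by an induction on $m$ reducing to the one-dimensional Beta integral, mirroring the inductive computation already used for $\mathcal{A}^h_m$ in Case~3 above). Once convergence is in place the evaluation is routine.
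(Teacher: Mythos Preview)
Your proposal is correct and follows essentially the same route as the paper: both invoke the general Theorem~\ref{main_02} to identify the norm with $\mathcal{B}^{h*}_m$, then pass to Heisenberg polar coordinates, substitute $s_i=r_i^Q$, and evaluate the resulting integral via the Dirichlet/Beta identity. The only cosmetic difference is that the paper carries out the inductive reduction to the one-variable Beta integral explicitly (following B\'enyi--Oh), whereas you quote the closed-form Dirichlet formula and remark that it is proved by the same induction.
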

In the above proof,we have obtained that $\mathcal{B}^h_m$ and $\mathcal{B}^{h*}_m$ are clearly valid, so we omit their proof and will focus on proving  their sharp constants.
\begin{proof}[Proof of Theorem \ref{thm1} and \ref{thm2}]
	In order to get the sharp constant of $m$-linear $n$-dimensional Hardy-Littlewood-P\'{o}lya operator, we will consider the following two scenarios and combine them to obtain our result.
	
	\textbf{Case 1 when $m=2$.}
	
	In this case , we have
	$$
	\mathcal{B}^h_m=\int_{\mathbb{H}^n}\int_{\mathbb{H}^n}\frac{|y_1|_h^{-\alpha_1}|y_2|_h^{\alpha_2}}{[\text{max}(1,|y_1|_h^Q,|y_2|_h^Q)]^2}dy_1dy_2.
	$$
	By calculation,we have
	$$
	\begin{aligned}
		\int_{\mathbb{H}^n}\int_{\mathbb{H}^n}\frac{|y_1|_h^{-\alpha_1}|y_2|_h^{\alpha_2}}{[\text{max}(1,|y_1|_h^Q,|y_2|_h^Q)]^2}dy_1dy_2
		=&\int_{|y_1|_h<1}\int_{|y_2|_h<1}|y_1|_h^{-\alpha_1}|y_2|_h^{-\alpha_2}dy_1dy_2\\
		&+\int_{|y_1|_h>1}\int_{|y_2|_h\leq|y_1|_h}|y_1|_h^{-\alpha_1-2Q}|y_2|_h^{-\alpha_2}dy_1 dy_2\\
		&+\int_{|y_2|_h>1}\int_{|y_1|_h<|y_2|_h}|y_1|_h^{-\alpha_1}|y_2|_h^{-\alpha_2-2Q}dy_1dy_2\\
		:=&I_0+I_1+I_2.
	\end{aligned}
	$$
	$$
	\begin{aligned}
		I_0=&\int_{|y_1|_h<1}\int_{|y_2|_h<1}|y_1|_h^{-\alpha_1}|y_2|_h^{-\alpha_2}dy_1dy_2\\
		=&\frac{\omega_Q^2}{(Q-\alpha_1)(Q-\alpha_2)},\\
		I_1=&\int_{|y_1|_h>1}\int_{|y_2|_h\leq|y_1|_h}|y_1|_h^{-\alpha_1-2Q}|y_2|_h^{-\alpha_2}dy_1 dy_2\\
		=&\frac{\omega_Q}{Q-\alpha_2}\int_{|y_1|_h>1}|y_1|_h^{-\alpha-Q}dy_1\\
		=&\frac{\omega_Q^2}{\alpha(Q-\alpha_2)}.
	\end{aligned}
	$$
	Similarly,
	$$
	\begin{aligned}
		I_2&=\int_{|y_2|_h>1}\int_{|y_1|_h<|y_2|_h}|y_1|_h^{-\alpha_1}|y_2|_h^{-\alpha_2-2Q}dy_1dy_2\\
		&=\frac{\omega_Q}{Q-\alpha_1}\int_{|y_2|_h>1}|y_2|_h^{-\alpha-Q}dy_2\\
		&=\frac{\omega_Q^2}{\alpha(Q-\alpha_1)}.
	\end{aligned}
	$$
	Thus,we have
	$$
	\begin{aligned}
		&\int_{\mathbb{H}^n}\int_{\mathbb{H}^n}\frac{|y_1|_h^{-\alpha_1}|y_2|_h^{\alpha_2}}{[\text{max}(1,|y_1|_h^Q,|y_2|_h^Q)]^2}dy_1dy_2\\
		=&I_0+I_1+I_2\\
		=&\frac{2Q\omega_Q^2}{\alpha(Q-\alpha_1)(q-\alpha_2)}.
	\end{aligned}\\
	$$
	
	\textbf{Case 2 when $m\geq3$.}
	
	Let
	$$
	\begin{aligned}
		&E_0=\{(y_1,\ldots,y_m)\in\mathbb{H}^n\times\cdots\times\mathbb{H}^n:|y_k|_h\leq1,1\leq k\leq m\};\\
		&E_1=\{(y_1,\ldots,y_m)\in\mathbb{H}^n\times\cdots\times\mathbb{H}^n:|y_1|_h>1,|y_k|_h\leq|y_1|_h,2\leq k\leq m\};\\
		&E_i=\{(y_1, \ldots, y_m) \in \mathbb{H}^n \times \cdots \times \mathbb{H}^n:|y_i|_h>1,|y_j|_h<|y_k|_h,|y_k|_h \leq|y_i|_h, 1 \leq j<i<k \leq m\};\\
		&E_m=\{(y_1,\ldots,y_m)\in\mathbb{H}^n\times\cdots\times\mathbb{H}^n:|y_m|_h>1,|y_j|_h<|y_m|_h,1<j<m\}.
	\end{aligned}
	$$
	Obviously, we have
	$$
	\bigcup_{j=0}^m E_j=\mathbb{H}^n \times \cdots \times \mathbb{H}^n,E_i \cap E_j=\emptyset.
	$$
	Taking
	$$
	K_j=\int_{\mathbb{H}^{nm}}\frac{\prod_{i=1}^m |y_i|_h^{-\alpha_i}}{[ \text{max}  (1,|y_1|_h^Q,\ldots,|y_m|_h^Q)]^m}dy_1\cdots dy_m ,
	$$
	then we begin to calculate $K_j$ with $j=0,1,\ldots,m$.
	$$
	K_0=\prod_{j=1}^m\int_{|y_j|\leq 1}|y_j|_h^{-\alpha_j}d y_j=\frac{\omega_Q^m}{\prod_{j=1}^m(Q-\alpha_j)},
	$$
	$$
	\begin{aligned}
		K_1&=\int_{|y_1|>1}|y_1|_h^{-\alpha_1-Qm}dy_1\prod_{j=2}^m\int_{|y_j|_h\leq|y_1|_h}|y_j|_h^{-\alpha_j}dy_j\\
		&=\frac{\omega_Q^{m-1}}{\prod_{j=2}^m(Q-\alpha_j)}\int_{|y_1|_h>1}|y_1|_h^{-\alpha-Q}dy_1\\
		&=\frac{\omega_Q^m}{\alpha\prod_{j=2}^m(Q-\alpha_j)}.
	\end{aligned}
	$$
	So we can deduce that
	$$
	K_j=\frac{\omega_Q^m}{\alpha \prod_{1\leq i\leq m,i\neq j} (Q-\alpha_j)}.
	$$
	Then we can obtain that
	$$
	K_m=\frac{mQ\omega_Q^m}{\alpha\prod_{j=1}^m(Q-\alpha_j)}.
	$$
	Combining the above two cases, we have completed the Theorem \ref{thm1}.
	Next, we follow the method given by B\'{e}nyi and Oh \cite{13} to calculate the sharp constant of $m$-linear $n$-dimensional Hilbert operator. Using the polar coordinates and making change of variables, we have
	$$
	\begin{aligned}
		\|P_m^{h*}\|_{\prod_{i=1}^m H^\infty_{\alpha_i}(\mathbb{H}^n)\rightarrow H^\infty_\alpha(\mathbb{H}^n)}&=\int_{\mathbb{H}^{nm}}\frac{|y_1|_h^{-\alpha_1}\cdots|y_m|_h^{-\alpha_m}}{(1+|y_1|_h^Q+\cdots+|y_m|_h^Q)} dy_1\cdots dy_m\\
		&=\omega_Q^m\int_0^\infty\cdots\int_0^\infty\frac{r_1^{Q-\alpha_1-1}\cdots r_m^{Q-\alpha_m-1}}{(1+r_1^Q+\cdots+r_m^Q)^m}dr_1\cdots dr_m\\
		&=\frac{\omega_Q^m}{Q^m}\int_0^\infty\cdots\int_0^\infty\frac{t_1^{\frac{1}{Q}(Q-\alpha_1)-1}\cdots t_m^{\frac{1}{Q}(Q-\alpha_m)-1}}{(1+t_1+\cdots+t_m)^m}d t_1\cdots d t_m\\
		&=\frac{\omega_Q^m}{Q^m}\int_0^\infty\cdots\int_0^\infty\frac{t_1^{-\frac{\alpha_1}{Q}}\cdots t_m^{-\frac{\alpha_m}{Q}}}{(1+t_1+\cdots+t_m)^m}d t_1\cdots d t_m\\
		&=\frac{\omega_Q^m}{Q^m}\int_0^\infty\cdots\int_0^\infty\frac{t_1^{-\beta_1}\cdots t_m^{-\beta_m}}{(1+t_1+\cdots+t_m)^\alpha}dt_1\cdots dt_m.
	\end{aligned}
	$$
	Set
	$$
	I_m(\alpha,\beta_1,\ldots,\beta_m):=\int_0^\infty\cdots \int_0^\infty\frac{t_1^{-\beta_1}\cdots t_m^{-\beta_m}}{(1+t_1+\cdots+t_m)^\alpha}dt_1\cdots dt_m.
	$$
	By a simple calculate, we have
	$$
	\begin{aligned}
		\int_0^\infty\frac{1}{(1+t)^\alpha t^\beta}dt&=\int_0^\infty(1-t)^{-\beta}t^{\alpha+\beta-2}dt\\
		&=B(1-\beta,\alpha+\beta-1).
	\end{aligned}
	$$
	Using the variable substitution $t_m=(1+t_1+\cdots+t_m-1)q_m$, we have
	$$
	\begin{aligned}
		&I_m(\alpha,\beta_1,\ldots,\beta_m)\\
		=&\int_0^\infty\cdots\int_0^\infty\frac{t_1^{-\beta_1}\cdots t_{m-1}^{-\beta_{m-1}}}{(1+t_1+\cdots+t_{m-1})^{\alpha-1+\beta_m}}dt_1\cdots dt_{m-1}\int_0^\infty\frac{1}{(1+q_m)^\alpha q_m^{\beta_m}}d q_m\\
		=&B(1-\beta_m,\alpha+\beta_m-1)I_m(\alpha-1+\beta_m,\beta_1,\ldots,\beta_{m-1}).
	\end{aligned}
	$$
	Using the inductive method combine with the properties of Gamma function, we have
	$$
	I_m(\alpha,\beta_1,\ldots,\beta_m)=\frac{\prod_{i=1}^m \Gamma(1-\beta_i) \Gamma(\alpha-k+\prod_{i=1}^m \beta_i)}{\Gamma(\alpha)}.
	$$
	Thus, we can obtain
	$$
	\begin{aligned}
		\|P_m^{h*}\|_{\prod_{i=1}^m H^\infty_{\alpha_i}(\mathbb{H}^n)\rightarrow H^\infty_\alpha(\mathbb{H}^n)}&=\frac{\omega_Q^m}{Q^m}\frac{\prod^m_{i=1}\Gamma(1-\frac{\alpha_i}{Q})\Gamma(\frac{\alpha}{Q})}{\Gamma(m)}\\
		&=\left(\frac{2 \pi^{n+\frac{1}{2}} \Gamma(n / 2)}{(n+1) \Gamma(n) \Gamma((n+1)/2)}\right)^m\frac{\prod^m_{i=1}\Gamma(1-\frac{\alpha_i}{Q})\Gamma(\frac{\alpha}{Q})}{\Gamma(m)}.
	\end{aligned}
	$$
	This finishes the proof of Theorem \ref{thm1} and \ref{thm2}.
\end{proof}

\subsection*{Acknowledgements}

This work was supported by National Natural Science Foundation of  China (Grant No. 12271232) and Shandong Jianzhu University Foundation (Grant No. X20075Z0101).

\begin{flushleft}

\vspace{0.3cm}\textsc{Xiang Li\\School of Science\\Shandong Jianzhu University\\Jinan, 250000\\P. R. China}

\emph{E-mail address}: \textsf{lixiang162@mails.ucas.ac.cn}

\vspace{0.3cm}\textsc{Zhanpeng Gu\\School of Science\\Shandong Jianzhu University \\Jinan, 250000\\P. R. China}

\emph{E-mail address}: \textsf{guzhanpeng456@163.com}

\vspace{0.3cm}\textsc{Dunyan Yan\\School of Mathematical Sciences\\University of Chinese Academy of Sciences\\Beijing, 100049\\P. R. China}

\emph{E-mail address}: \textsf{ydunyan@ucas.ac.cn}

\vspace{0.3cm}\textsc{Zhongci Hang\\School of Science\\Shandong Jianzhu University \\Jinan, 250000\\P. R. China}

\emph{E-mail address}: \textsf{babysbreath4fc4@163.com}

\end{flushleft}


\begin{thebibliography}{99}
	
	


\bibitem{3} W.Beckner  Pitt's inequality with sharp convolution estimates. Proc Amer Math Soc, 2008, 136: 1871--1885.

\bibitem{4} W.Beckner  Weighted inequalities and Stein-Weiss potentials. Forum Math, 2008, 20: 587--606.

\bibitem{13} \'{A}. B\'{e}nyi and T. Oh, Best constants for certain multilinear integral operators. J. Inequal.Appl,2006, Article ID 28582, 1--12.

\bibitem{7} G.B. Folland and E.M. Stein, Hardy spaces on homogeneous groups, Princeton, N. J. Princeton University Press, 1982.


\bibitem{11}Z. W. Fu, L. Grafakos, S. Z. Lu, and F. Y. Zhao, Sharp bounds for $m$-linear Hardy and Hilbert operators, Houston Journal of Mathematics, 2012, 38(1): 225--244.

\bibitem{12}G.H. Hardy, J.E. Littlewood and G. P\'{o}lya, Inequalities, 2nd ed., Cambridge University Press, Cambridge, UK, 1952.

\bibitem{6}A. Kor\"{a}anyi and H.M. Reimann, Quasiconformal mappings on the Heisenberg group, Invent. Math., 80 ,1985, 309--338.

\bibitem{5}Coulhon T, Muller D, Zienkiewicz J. About Riesz transforms on the Heisenberg groups. Math
Ann, 1996, 305(2): 369--379.

\bibitem{1} S. Stevi'c, Note on norm of an $m$-linear integral-type operator between weighted-type spaces. Adv. Differ. Equ. 2021, 187--198.

\bibitem{2} E. M. Stein and G. Weiss, Fractional integrals on $n$-dimensional Euclidean space, J. Math. Mech. 7 ,1958, 503--514.



\bibitem{8} S. Thangavelu, Harmonic analysis on the Heisenberg group, Progress in Mathematics, vol. 159, Boston, MA: Birkh¨auser Boston, 1998.

\bibitem{9}D. Wu and D.Y. Yan, Sharp constants for a class of multilinear integral operators and some applications, Sci. China Math., 59 (5) (2016), 907--920.

\bibitem{10}Q.Y. Wu and Z.W.Fu ,Sharp estimates for Hardy operators on Heisenberg group,Front.Math.China 2016,11(1):155--172.





\end{thebibliography}
\end{document}